\numberwithin{equation}{section}
\numberwithin{figure}{section}
\theoremstyle{plain}
\newtheorem{thm}{\protect\theoremname}[section]
\theoremstyle{plain}
\theoremstyle{definition}
\theoremstyle{plain}
\newtheorem{lem}[thm]{\protect\lemmaname}
\newtheorem{cor}[thm]{\protect\corollaryname}
\theoremstyle{plain}
\newtheorem{rem}[thm]{\protect\remarkname}
\theoremstyle{plain}
\providecommand{\definitionname}{Definition}
\providecommand{\lemmaname}{Lemma}
\providecommand{\theoremname}{Theorem}
\providecommand{\corollaryname}{Corollary}
\providecommand{\remarkname}{Remark}
\providecommand{\propositionname}{Proposition}
\DeclareMathOperator{\loc}{loc}
\DeclareMathOperator{\ess}{ess}
\DeclareMathOperator{\cp}{cap}
\begin{document}

\title[Capacity inequalities and Lipschitz continuity]{Capacity inequalities and Lipschitz continuity of mappings}

\author{R.~Salimov, E.~Sevost'yanov, A.~Ukhlov}
\begin{abstract}
In this paper we consider topological mappings defined by $p$-capacity inequalities in domains of $\mathbb R^n$. In the case $p=n-1$ we prove the Lipschitz continuity of such mappings, that extends the result by F.~W.~Gehring.
\end{abstract}
\maketitle
\footnotetext{\textbf{Key words and phrases:} Sobolev spaces, Quasiconformal mappings}
\footnotetext{\textbf{2000
Mathematics Subject Classification:} 46E35, 30C65.}

\section{Introduction }

This article is devoted to the study of mappings defined by capacity (moduli) inequalities, which have been actively studied in the recent years, see, for example, \cite{Cr16}--\cite{Cr21}, \cite{Ger69}, \cite{GSS15},
\cite{MRSY09}, \cite{Sev11} and \cite{SS11}. In this article we consider homeomorphic mappings $\varphi: \Omega\to\widetilde{\Omega}$, where $\Omega,
\widetilde{\Omega}$  are domains in $\mathbb R^n$, defined by $p$-capacity inequalities
\begin{equation}
\label{def_pp}
\cp_p(\varphi(F_0),\varphi(F_1);\widetilde{\Omega})\leq K_p^p \cp_p(F_0,F_1;\Omega), \,\,1<p<\infty.
\end{equation}
In the case $p=n$ we have usual quasiconformal mappings \cite{V71} and in the case $p\ne n$
this class of mappings was introduced in \cite{Ger69}. In accordance with \cite{VU04,VU05} we define a homeomorphic mapping $\varphi: \Omega\to\widetilde{\Omega}$ as the mapping of bounded $p$-capacitory distortion, if the inequality (\ref{def_pp}) holds for any condenser $(F_0,F_1)\subset\Omega$.

The first topic of the article is devoted to the characterization of homeomorphic mappings defined by $p$-capacity inequalities (\ref{def_pp}) in terms of the inner $p$-dilatation. In the case of mappings with the conformal moduli inequalities of the Poletsky type, the estimates of the inner dilatation was obtained in \cite{SS11A}. Similar estimates of dilatation  in the case of the $p$-modulus, $n-1<p\leqslant n,$ were obtained in~\cite{GS12} and~\cite{GSS17} for homeomorphisms and mappings with a branching, respectively. In this article, we prove:

\vskip 0.2cm
\noindent
{\it Let  $\varphi: \Omega \to \widetilde{\Omega}$ be a homeomorphic mapping.
Then $\varphi$ is the mapping of bounded $p$-capacitory distortion, $p>n-1$, if and only if $\varphi\in W^1_{p',\loc}(\Omega)$, $p'=p/(p-n+1)$, has finite distortion and
$$
\ess\sup\limits_{x\in\Omega}\left(\frac{|J(x,\varphi)|}{l(D\varphi(x))^p}\right)^{\frac{1}{p}}=K_p<\infty, \,\,p>n-1.
$$}
\vskip 0.2cm

The second  topic of the article is devoted the continuity of mappings in the sense of Lipschitz. In \cite{Ger69} the Lipschitz continuity of the mapping of bounded $p$-capacitory distortion proved in the case $n-1<p<n$ and an example was given, that in the case $1<p<n-1$ the Lipschitz continuity does not hold.  Results on of such type have been obtained for of mappings of finite distortion with some restrictions, see, e.g., \cite{GG09}, \cite{MRSY09} and \cite{RSS20}. In the present work by using the methods of the composition operators theory \cite{VU04,VU05} we study analytical properties of these mappings and we prove the Lipschitz continuity in the limit case $p=n-1$.

\vskip 0.2cm
\noindent
{\it Let  $\varphi: \Omega \to \widetilde{\Omega}$ be a homeomorphic mapping of bounded $(n-1)$-capacitory distortion.
Then $\varphi$ belongs to the Sobolev space $L^1_{\infty}(\Omega)$.}
\vskip 0.2cm

This result extends the result by F.~W.~Gehring \cite{Ger69} to the limit case $p=n-1$.

\section{Composition operators and capacity inequalities}

\subsection{Sobolev spaces and composition operators}

Let $\Omega$ be an open subset of $\mathbb R^n$, $n\geq 2$, the Sobolev space $W^1_p(\Omega)$, $1\leq p\leq\infty$, is defined
as a Banach space of locally integrable weakly differentiable functions
$f:\Omega\to\mathbb{R}$ equipped with the following norm:
\[
\|f\mid W^1_p(\Omega)\|=\| f\mid L_p(\Omega)\|+\|\nabla f\mid L_p(\Omega)\|.
\]
The Sobolev space $W^{1}_{p,\loc}(\Omega)$ is defined as a space of functions $f\in W^{1}_{p}(U)$ for every open and bounded set $U\subset  \Omega$ such that
$\overline{U}  \subset \Omega$.

The homogeneous seminormed Sobolev space $L^1_p(\Omega)$, $1\leq p\leq\infty$, is defined as a space
of locally integrable weakly differentiable functions $f:\Omega\to\mathbb{R}$ equipped
with the following seminorm:
\[
\|f\mid L^1_p(\Omega)\|=\|\nabla f\mid L_p(\Omega)\|.
\]
Recall that in Lipschitz domains $\Omega\subset\mathbb R^n$, $n\geq 2$, Sobolev spaces $W^1_p(\Omega)$ and $L^1_p(\Omega)$ are coincide (see, for example, \cite{M}).

In accordance with the non-linear capacity theory \cite{MH72} we consider elements of Sobolev spaces $W^{1,p}(\Omega)$ as  classes of equivalence up to a set of $p$-capacity zero  \cite{M}. 

Let $\Omega$ and $\widetilde{\Omega}$ be domains in the Euclidean space $\mathbb R^n$. Then a homeomorphism $\varphi:\Omega\to\widetilde{\Omega}$ belongs to the Sobolev space $W^1_{p,\loc}(\Omega)$ ($L^1_p(\Omega)$), if its coordinate functions belong to $W^1_{p,\loc}(\Omega)$ ($L^1_p(\Omega)$). In this case, the formal Jacobi matrix $D\varphi(x)$ and its determinant (Jacobian) $J(x,\varphi)$ are well defined at almost all points $x\in\Omega$. We denote
$$
|D\varphi(x)|:=\max\limits_{|v|=1}|D\varphi(x)\cdot v|\,\,\text{and}\,\,l(D\varphi(x)):=\min\limits_{|v|=1}|D\varphi(x)\cdot v|
$$
the maximal dilatation of the linear operator $D\varphi(x)$ and the minimal dilatation of the linear operator $D\varphi(x)$ correspondly.

Let $\Omega$ and $\widetilde{\Omega}$ be domains in $\mathbb R^n$, $n\geq 2$. Then
a homeomorphic mapping $\varphi:\Omega\to\widetilde{\Omega}$ induces a bounded composition
operator \cite{VU04,VU05}
\[
\varphi^{\ast}:L^1_p(\widetilde{\Omega})\to L^1_q(\Omega),\,\,\,1\leq q\leq p\leq\infty,
\]
by the composition rule $\varphi^{\ast}(f)=f\circ\varphi$, if for
any function $f\in L^1_p(\widetilde{\Omega})$, the composition $\varphi^{\ast}(f)\in L^1_q(\Omega)$
is defined quasi-everywhere in $\Omega$ and there exists a constant $K_{p,q}(\Omega)<\infty$ such that
\[
\|\varphi^{\ast}(f)\mid L^1_q(\Omega)\|\leq K_{p,q}(\Omega)\|f\mid L^1_p(\widetilde{\Omega})\|.
\]

The problem of the characterization of mappings generate bounded composition operators on Sobolev spaces arises to the Reshetnyak Problem (1968) and is closely connected with the quasiconformal mappings theory \cite{VG75}. The solution of this problem   is given by the following theorem \cite{U93} (see, also \cite{VU04,VU05} and \cite{GU10} for the case $p=\infty$).

Recall that a $p$-distortion of a mapping $\varphi$ at a point $x\in\Omega$ is defined as
$$
K_p(x)=\inf \{k(x): |D\varphi(x)|\leq k(x) |J(x,\varphi)|^{\frac{1}{p}},\,\,x\in\Omega \}.
$$
In the case $p=n$ we have the usual conformal dilatation and in the case $p\ne n$ the $p$-dilatation arises in \cite{Ger69} (see, also, \cite{V88}).

\begin{thm}
\label{CompTh} Let $\varphi:\Omega\to\widetilde{\Omega}$ be a homeomorphic mapping between two domains $\Omega$ and $\widetilde{\Omega}$. Then $\varphi$ generates a bounded composition operator
\[
\varphi^{\ast}:L^1_p(\widetilde{\Omega})\to L^1_q(\Omega),\,\,\,1\leq q\leq p\leq\infty,
\]
 if and only if $\varphi$ is a Sobolev mapping of the class $W^1_{q,\loc}(\Omega;\widetilde{\Omega})$, has finite distortion
and
\[
K_{p,q}(\varphi;\Omega)=\|K_p \mid L_{\kappa}(\Omega)\|<\infty,
\]
where $1/q-1/p=1/{\kappa}$ ($\kappa=\infty$, if $p=q$).
\end{thm}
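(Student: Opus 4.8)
The plan is to prove the two implications separately. The sufficiency is a direct pointwise estimate combined with a change of variables; the necessity, which is the substantial direction, rests on associating to the operator $\varphi^{\ast}$ a countably additive set function whose Lebesgue density recovers the $p$-distortion.

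For sufficiency, assume $\varphi\in W^1_{q,\loc}(\Omega;\widetilde{\Omega})$ has finite distortion and $\|K_p\mid L_\kappa(\Omega)\|<\infty$. I would first verify the composition rule: for a Lipschitz (or smooth) $f$ on $\widetilde{\Omega}$ the chain rule for Sobolev mappings gives $\nabla(f\circ\varphi)(x)=D\varphi(x)^{T}(\nabla f)(\varphi(x))$ for a.e. $x$, so $|\nabla(f\circ\varphi)(x)|\le |D\varphi(x)|\,|(\nabla f)(\varphi(x))|$. Inserting the pointwise bound $|D\varphi(x)|\le K_p(x)\,|J(x,\varphi)|^{1/p}$ (available since $\varphi$ has finite distortion) and applying H\"older's inequality to the product $K_p(x)^q\cdot\big(|J(x,\varphi)|^{1/p}|(\nabla f)(\varphi(x))|\big)^q$ with the conjugate exponents $\kappa/q$ and $p/q$ — conjugate precisely because $1/q-1/p=1/\kappa$ — yields $\|\nabla(f\circ\varphi)\mid L_q(\Omega)\|\le \|K_p\mid L_\kappa(\Omega)\|\,\big(\int_\Omega |J(x,\varphi)|\,|(\nabla f)(\varphi(x))|^p\,dx\big)^{1/p}$. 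The change-of-variables inequality for Sobolev homeomorphisms bounds the last integral by $\int_{\widetilde{\Omega}}|\nabla f|^p\,dy$, giving the operator estimate with $K_{p,q}(\varphi;\Omega)\le\|K_p\mid L_\kappa(\Omega)\|$. A standard approximation then extends the bound from Lipschitz functions to all of $L^1_p(\widetilde{\Omega})$ and shows $\varphi^{\ast}f$ is defined quasi-everywhere.

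For necessity, assume $\varphi^{\ast}$ is bounded with norm $K_{p,q}$. Applying $\varphi^{\ast}$ to functions coinciding locally with the coordinate functions (cut off so as to lie in $L^1_p$) shows each coordinate of $\varphi$ lies in $L^1_{q,\loc}$, hence $\varphi\in W^1_{q,\loc}(\Omega;\widetilde{\Omega})$; finite distortion will follow a posteriori. The core of the argument is the set function
\[
\Phi(A)=\sup\Big\{\frac{\|\varphi^{\ast}f\mid L^1_q(A)\|^\kappa}{\|f\mid L^1_p(\widetilde{\Omega})\|^\kappa}: f\in L^1_p(\widetilde{\Omega}),\ \operatorname{supp}\nabla f\subset\varphi(A)\Big\}
\]
defined on open $A\subset\Omega$. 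For disjoint open sets the test gradients are forced to have disjoint supports, so the seminorms split additively in the $p$-th power of the input and the $q$-th power of the output; the elementary duality $\sup_{\alpha}\|(\alpha_i a_i)\mid\ell^q\|/\|\alpha\mid\ell^p\|=\|(a_i)\mid\ell^\kappa\|$, valid exactly when $1/q-1/p=1/\kappa$, upgrades this to finite additivity, and monotonicity then gives countable additivity. Boundedness of $\varphi^{\ast}$ gives $\Phi(\Omega)\le K_{p,q}^\kappa<\infty$, so $\Phi$ extends to a finite Borel measure. Differentiating with respect to Lebesgue measure, at a.e. $x_0$ (a point of differentiability of $\varphi$ and a Lebesgue point of the data) the density $\Phi'(x_0)=\lim_{r\to0}\Phi(B(x_0,r))/|B(x_0,r)|$ exists, and one identifies $\Phi'(x_0)=K_p(x_0)^\kappa$: the bound $\le$ follows from the localized sufficiency estimate, while the reverse inequality is obtained by testing with functions whose gradient near $\varphi(x_0)$ is aligned with the direction realizing $|D\varphi(x_0)|=\max_{|v|=1}|D\varphi(x_0)\cdot v|$, so the operator detects the full dilatation. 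Integrating gives $\int_\Omega K_p^\kappa\,dx=\Phi(\Omega)\le K_{p,q}^\kappa$, hence $\|K_p\mid L_\kappa(\Omega)\|\le K_{p,q}(\varphi;\Omega)$, and finiteness of this integral forces $D\varphi=0$ a.e. on $\{J(\cdot,\varphi)=0\}$, i.e. finite distortion. Combined with sufficiency this yields the equality $K_{p,q}(\varphi;\Omega)=\|K_p\mid L_\kappa(\Omega)\|$.

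The main obstacle is the necessity direction: establishing countable additivity of $\Phi$ and then pinning its density to exactly $K_p^\kappa$. The delicate point is constructing near-extremal test functions that simultaneously respect the support constraint and align with the maximal-stretching direction of $D\varphi(x_0)$, which forces one to work at points of approximate differentiability and to control the chain-rule error terms. The borderline case $p=q$, where $\kappa=\infty$ and the additive set function is unavailable, must be handled separately by replacing measure-theoretic differentiation with an $\ess\sup$ estimate read off from the operator bound on small balls, as in \cite{GU10}.
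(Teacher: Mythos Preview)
The paper does not supply a proof of this theorem at all: it is quoted as a known result, with attribution to \cite{U93} (and \cite{VU04,VU05}, \cite{GU10} for $p=\infty$). There is therefore no ``paper's own proof'' to compare against.

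That said, your sketch is essentially the argument of the cited references. The sufficiency direction via the chain rule, the pointwise bound $|D\varphi|\le K_p|J(\cdot,\varphi)|^{1/p}$, H\"older with exponents $\kappa/q$ and $p/q$, and the change-of-variables inequality is the standard route. For necessity, the set-function method you outline --- defining a monotone countably additive $\Phi$ on open subsets from localized operator norms and then differentiating it to recover $K_p^\kappa$ --- is precisely the Vodop'yanov--Ukhlov machinery of \cite{U93,VU04,VU05}. Your remark that $p=q$ (so $\kappa=\infty$) needs a separate $\ess\sup$ argument, and your citation of \cite{GU10} for the endpoint $p=\infty$, match the paper's own attributions. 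So while there is nothing in the present paper to grade against, your proposal is aligned with the literature it invokes.
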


The following theorem give properties of mappings, which are inverse to mappings generate bounded composition operators on Sobolev spaces.

\begin{thm}
\label{CompThD} Let a homeomorphic mapping $\varphi:\Omega\to\widetilde{\Omega}$
between two domains $\Omega$ and $\widetilde{\Omega}$ generate a bounded composition
operator
\[
\varphi^{\ast}:L^1_p(\widetilde{\Omega})\to L^1_{q}(\Omega),\,\,\,n-1<q \leq p< \infty.
\]
Then the inverse mapping $\varphi^{-1}:\widetilde{\Omega}\to\Omega$ generates a bounded composition operator
\[
\left(\varphi^{-1}\right)^{\ast}:L^1_{q'}(\Omega)\to L^1_{p'}(\widetilde{\Omega}),
\]
where $p'=p/(p-n+1)$, $q'=q/(q-n+1)$.
\end{thm}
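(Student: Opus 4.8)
The plan is to deduce everything from the analytic criterion of Theorem~\ref{CompTh}, applied now to the inverse homeomorphism $\varphi^{-1}$ with the exponent pair $(q',p')$. Since $q>n-1$ forces $p-n+1>0$ and $q-n+1>0$, the exponents $p',q'$ are well defined and exceed $1$; moreover $s\mapsto s/(s-n+1)$ is strictly decreasing on $(n-1,\infty)$, so $q\le p$ gives $p'\le q'$ and Theorem~\ref{CompTh} is indeed applicable in the form $L^1_{q'}(\Omega)\to L^1_{p'}(\widetilde\Omega)$ (target exponent $\le$ source exponent). First I would feed the hypothesis into the ``only if'' direction of Theorem~\ref{CompTh}: boundedness of $\varphi^{\ast}:L^1_p(\widetilde\Omega)\to L^1_q(\Omega)$ yields $\varphi\in W^1_{q,\loc}(\Omega;\widetilde\Omega)$, finite distortion of $\varphi$, and
\[
\|K_p\mid L_{\kappa}(\Omega)\|<\infty,\qquad \frac1\kappa=\frac1q-\frac1p,
\]
where $K_p(x)=|D\varphi(x)|/|J(x,\varphi)|^{1/p}$. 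The goal becomes to verify, for $\varphi^{-1}$, membership in $W^1_{p',\loc}(\widetilde\Omega;\Omega)$, finite distortion, and integrability of its $q'$-distortion in $L_{\tilde\kappa}(\widetilde\Omega)$, where a short computation gives $1/p'-1/q'=(n-1)(1/q-1/p)$, i.e. $\tilde\kappa=\kappa/(n-1)$.

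The computational heart is a pointwise singular-value identity. At a point $x$ of differentiability of $\varphi$ with $J(x,\varphi)\ne0$ and $y=\varphi(x)$, one has $D\varphi^{-1}(y)=(D\varphi(x))^{-1}$, so with singular values $\sigma_1\ge\dots\ge\sigma_n>0$ of $D\varphi(x)$ one gets $|D\varphi^{-1}(y)|=1/l(D\varphi(x))$ and $|J(y,\varphi^{-1})|=1/|J(x,\varphi)|$; hence the $q'$-distortion of $\varphi^{-1}$ at $y$ equals $|J(x,\varphi)|^{1/q'}/l(D\varphi(x))$. The key elementary inequality is $l(D\varphi(x))=|J(x,\varphi)|/(\sigma_1\cdots\sigma_{n-1})\ge |J(x,\varphi)|/|D\varphi(x)|^{n-1}$, which (using $1-1/q'=(n-1)/q$) gives
\[
\frac{|J(x,\varphi)|^{1/q'}}{l(D\varphi(x))}\le \frac{|D\varphi(x)|^{n-1}}{|J(x,\varphi)|^{(n-1)/q}}=\left(\frac{|D\varphi(x)|}{|J(x,\varphi)|^{1/q}}\right)^{n-1}.
\]
Transferring to $\widetilde\Omega$ by the area formula for $\varphi^{-1}$ (licit once $\varphi^{-1}$ is a.e.\ differentiable, see below; the multiplicity $\le1$ gives the required inequality without Luzin's condition~N), raising to the power $\tilde\kappa$ and carrying the weight $|J(x,\varphi)|$, the exponents collapse exactly: since $(n-1)\tilde\kappa=\kappa$ and $\kappa/q-1=\kappa/p$, the integrand becomes $|D\varphi(x)|^{\kappa}/|J(x,\varphi)|^{\kappa/p}=K_p(x)^{\kappa}$, whence
\[
\int_{\widetilde\Omega}\left(\frac{|D\varphi^{-1}(y)|}{|J(y,\varphi^{-1})|^{1/q'}}\right)^{\tilde\kappa}dy\le\int_{\Omega}K_p(x)^{\kappa}\,dx<\infty,
\]
with an explicit bound of the norm by $K_{p,q}(\varphi;\Omega)$.

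The remaining and hardest ingredient is the structural regularity of the inverse: that $\varphi^{-1}$ is weakly differentiable with finite distortion, together with the a.e.\ identity $D\varphi^{-1}(\varphi(x))=(D\varphi(x))^{-1}$ and the a.e.\ differentiability of both maps used above. This is exactly the step that consumes the hypothesis $q>n-1$: for a homeomorphism of finite distortion this threshold is what guarantees $\varphi^{-1}\in W^1_{1,\loc}(\widetilde\Omega)$ and the a.e.\ differentiability making the singular-value bookkeeping meaningful. Once $\varphi^{-1}\in W^1_{1,\loc}$ of finite distortion is in hand, its membership in $W^1_{p',\loc}(\widetilde\Omega)$ follows from the distortion estimate by H\"older's inequality: writing $|D\varphi^{-1}|^{p'}$ as the $q'$-distortion to the power $p'$ times $|J(\cdot,\varphi^{-1})|^{p'/q'}$, the conjugate exponents $\tilde\kappa/p'$ and $q'/p'$ (their reciprocals summing to $1$ precisely because $1/\tilde\kappa+1/q'=1/p'$) give, on every relatively compact $\widetilde V$,
\[
\int_{\widetilde V}|D\varphi^{-1}(y)|^{p'}\,dy\le \left(\int_{\widetilde V}\left(\frac{|D\varphi^{-1}(y)|}{|J(y,\varphi^{-1})|^{1/q'}}\right)^{\tilde\kappa}dy\right)^{p'/\tilde\kappa}\left(\int_{\widetilde V}|J(y,\varphi^{-1})|\,dy\right)^{p'/q'}<\infty,
\]
the last factor being finite since $\int_{\widetilde V}|J(y,\varphi^{-1})|\,dy\le|\varphi^{-1}(\widetilde V)|$. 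With all three hypotheses of Theorem~\ref{CompTh} verified for $\varphi^{-1}$ and the pair $(q',p')$, its ``if'' direction produces the bounded operator $(\varphi^{-1})^{\ast}:L^1_{q'}(\Omega)\to L^1_{p'}(\widetilde\Omega)$. The degenerate case $q=p$ (so $\kappa=\tilde\kappa=\infty$ and $p'=q'$) is handled identically, replacing the integrals by essential suprema, the same singular-value inequality giving the pointwise bound by $K_p(x)^{n-1}$.
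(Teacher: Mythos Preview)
The paper does not supply its own proof of this theorem: it is stated as a background result, with the references \cite{VU04,VU05} (and implicitly \cite{U93,VU98}) carrying the argument. So there is no ``paper's proof'' to compare against; what can be said is whether your reconstruction is sound.

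Your proposal is essentially the standard duality argument behind this statement, and the computational core is correct. The exponent bookkeeping ($p'\le q'$, $\tilde\kappa=\kappa/(n-1)$, $\kappa/q-1=\kappa/p$) is right, the singular-value inequality $l(D\varphi)\ge|J(\cdot,\varphi)|/|D\varphi|^{n-1}$ gives the pointwise bound $K_{q'}(\varphi^{-1})(\varphi(x))\le K_q(x)^{n-1}$, and after the change of variables the powers collapse exactly to $\int_\Omega K_p^{\kappa}$, as you compute. Your H\"older step upgrading $W^1_{1,\loc}$ to $W^1_{p',\loc}$ is also correct, and your remark that multiplicity $\le 1$ gives the needed inequality in the area formula without Luzin's $N$-condition is the right way to handle the transfer of integrals.

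The only place that is not self-contained is the one you flag yourself: that $\varphi^{-1}\in W^1_{1,\loc}(\widetilde\Omega)$ with finite distortion and a.e.\ differentiability, together with the a.e.\ compatibility $D\varphi^{-1}(\varphi(x))=(D\varphi(x))^{-1}$. This is indeed where $q>n-1$ is spent, and in the sources the paper cites it is established as part of the $(p,q)$-quasiconformal theory; you are right to treat it as an input rather than rederive it. With that granted, your proof goes through.
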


\subsection{Capacity inequalities}
Recall the notion of the variational $p$-capacity \cite{GResh}. The condenser in the domain $\Omega\subset \mathbb R^n$ is the pair $(F_0,F_1)$ of connected closed relatively to $\Omega$ sets $F_0,F_1\subset \Omega$. A continuous function $u\in L_p^1(\Omega)$ is called an admissible function for the condenser $(F_0,F_1)$,
if the set $F_i\cap \Omega$ is contained in some connected component of the set $\operatorname{Int}\{x\vert u(x)=i\}$,\ $i=0,1$. We call $p$-capacity of the condenser $(F_0,F_1)$ relatively to domain $\Omega$
the value
$$
{{\cp}}_p(F_0,F_1;\Omega)=\inf\|u\vert L_p^1(\Omega)\|^p,
$$
where the greatest lower bond is taken over all admissible for the condenser $(F_0,F_1)\subset\Omega$ functions. If the condenser have no admissible functions we put the capacity is equal to infinity.

Let $\varphi:\Omega\to\widetilde{\Omega}$ be a homeomorphic mapping between two domains $\Omega$ and $\widetilde{\Omega}$. Then $\varphi$ is called the mapping of bounded $p$-capacitory distortion, if the inequality
\begin{equation}
\label{CE1}
\cp_p(\varphi(F_0),\varphi(F_1);\widetilde{\Omega})\leq K_p^p \cp_p(F_0,F_1;\Omega), \,\,1<p<\infty.
\end{equation}
holds for any condenser $(F_0,F_1)\subset\Omega$.

\begin{thm}\label{pp}
Let  $\varphi: \Omega \to \widetilde{\Omega}$ be a homeomorphic mapping.
Then $\varphi$ is the mapping of bounded $p$-capacitory distortion, $p>n-1$, if and only if $\varphi\in W^1_{p',\loc}(\Omega)$, $p'=p/(p-n+1)$, has finite distortion and
$$
\ess\sup\limits_{x\in\Omega}\left(\frac{|J(x,\varphi)|}{l(D\varphi(x))^p}\right)^{\frac{1}{p}}=K_p<\infty, \,\,p>n-1.
$$
\end{thm}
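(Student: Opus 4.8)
The plan is to translate the $p$-capacitory distortion of $\varphi$ into the boundedness of a composition operator generated by the inverse homeomorphism, and then to extract the analytic description from Theorems~\ref{CompTh} and~\ref{CompThD}. The starting point is the duality between the variational $p$-capacity and the seminorm of $L^1_p$: namely, $\varphi$ is a mapping of bounded $p$-capacitory distortion with constant $K_p$ if and only if $\varphi^{-1}\colon\widetilde\Omega\to\Omega$ generates a bounded composition operator $(\varphi^{-1})^{\ast}\colon L^1_p(\Omega)\to L^1_p(\widetilde\Omega)$ of norm at most $K_p$. One direction is elementary: if $u$ is admissible for a condenser $(F_0,F_1)\subset\Omega$, then, $\varphi$ being a homeomorphism, $v=u\circ\varphi^{-1}=(\varphi^{-1})^{\ast}(u)$ is continuous and admissible for $(\varphi(F_0),\varphi(F_1))\subset\widetilde\Omega$, and the operator bound gives $\|v\mid L^1_p(\widetilde\Omega)\|^p\le K_p^p\|u\mid L^1_p(\Omega)\|^p$; taking the infimum over admissible $u$ yields~\eqref{CE1}. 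The converse, recovering the operator bound from the capacity inequality for all condensers, is the capacitary characterization of composition operators from the Vodopyanov--Ukhlov theory.

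Granting this duality, I would apply Theorem~\ref{CompTh} to $\varphi^{-1}$ with $q=p$ (so that $\kappa=\infty$): the operator $(\varphi^{-1})^{\ast}\colon L^1_p(\Omega)\to L^1_p(\widetilde\Omega)$ is bounded if and only if $\varphi^{-1}\in W^1_{p,\loc}(\widetilde\Omega)$ has finite distortion and $\|K_p(\varphi^{-1})\mid L_\infty(\widetilde\Omega)\|<\infty$. I then identify this distortion with the quantity in the statement. Writing $y=\varphi(x)$ and using $D\varphi^{-1}(y)=(D\varphi(x))^{-1}$, $J(y,\varphi^{-1})=J(x,\varphi)^{-1}$ together with the singular-value relation $|(D\varphi(x))^{-1}|=l(D\varphi(x))^{-1}$, one gets
\[
K_p(\varphi^{-1})(\varphi(x))=\frac{|D\varphi^{-1}(\varphi(x))|}{|J(\varphi(x),\varphi^{-1})|^{1/p}}=\frac{|J(x,\varphi)|^{1/p}}{l(D\varphi(x))}=\left(\frac{|J(x,\varphi)|}{l(D\varphi(x))^{p}}\right)^{1/p}.
\]
Since $\varphi$ and $\varphi^{-1}$ satisfy the Luzin $N$ and $N^{-1}$ conditions, null sets correspond under $\varphi$, whence $\|K_p(\varphi^{-1})\mid L_\infty(\widetilde\Omega)\|=\ess\sup_{x\in\Omega}(|J(x,\varphi)|/l(D\varphi(x))^{p})^{1/p}=K_p$, the asserted condition.

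The final step passes between the regularity of $\varphi^{-1}$ and that of $\varphi$, and here Theorem~\ref{CompThD} enters. For necessity, since $n-1<p\le p<\infty$ is legitimate for $p>n-1$, applying Theorem~\ref{CompThD} to $\varphi^{-1}$ produces a bounded operator $\varphi^{\ast}\colon L^1_{p'}(\widetilde\Omega)\to L^1_{p'}(\Omega)$ with $p'=p/(p-n+1)$, and Theorem~\ref{CompTh} then gives $\varphi\in W^1_{p',\loc}(\Omega)$ with finite distortion, completing the list of properties. For sufficiency one runs the chain backwards: the hypotheses that $\varphi\in W^1_{p',\loc}(\Omega)$ has finite distortion and that $K_p<\infty$ have to be shown to force $\varphi^{-1}\in W^1_{p,\loc}(\widetilde\Omega)$ with finite distortion, after which Theorem~\ref{CompTh} together with the identity above delivers the operator bound for $(\varphi^{-1})^{\ast}$ and hence~\eqref{CE1}.

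I expect the main obstacle to lie in the analytic underpinnings rather than in this formal scheme. The delicate points are the converse half of the capacity--composition duality, proved through the capacitary description of the $L^1_p$-seminorm, and --- especially in the sufficiency direction --- the verification of the regularity and non-degeneracy that justify the computations above: that $\varphi$ and $\varphi^{-1}$ are differentiable almost everywhere with $J(x,\varphi)\ne0$ a.e., obey the Luzin $N$ and $N^{-1}$ properties, and thereby validate the chain rule, the change-of-variables formula, and the transfer between the Sobolev classes $W^1_{p',\loc}(\Omega)$ and $W^1_{p,\loc}(\widetilde\Omega)$. It is precisely finite distortion together with $p>n-1$ that makes these properties available.
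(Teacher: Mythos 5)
Your overall scheme is the same as the paper's: pass to $\psi=\varphi^{-1}$, use the equivalence between the capacity inequality and the boundedness of $\left(\varphi^{-1}\right)^{\ast}\colon L^1_p(\Omega)\to L^1_p(\widetilde{\Omega})$ (Theorem~\ref{theorem:CapacityDescPP}), read off the analytic description of $\varphi^{-1}$ from Theorem~\ref{CompTh} with $q=p$, transfer regularity to $\varphi$ by Theorem~\ref{CompThD}, and identify the dilatation of $\varphi^{-1}$ with $\left(|J(x,\varphi)|/l(D\varphi(x))^p\right)^{1/p}$. However, there is a genuine gap at the step where you pass from the pointwise matrix identities to the equality of essential suprema. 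You justify it by asserting that ``$\varphi$ and $\varphi^{-1}$ satisfy the Luzin $N$ and $N^{-1}$ conditions, [so] null sets correspond under $\varphi$,'' and you list ``$J(x,\varphi)\ne 0$ a.e.'' among the properties that finite distortion with $p>n-1$ makes available. Neither claim is available here: for $p\ne n$ only \emph{one} of the two Luzin properties is guaranteed (for $n-1<p<n$ the inverse has the $N^{-1}$-property, for $p>n$ it has the $N$-property, and both hold only when $p=n$), so null sets do not correspond in both directions; and finite distortion permits $J(x,\varphi)=0$ on a set of \emph{positive} measure, where $D\varphi^{-1}(\varphi(x))=(D\varphi(x))^{-1}$ and your singular-value relations are meaningless.

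The paper closes exactly this gap with an exceptional-set argument that your proposal lacks: it sets $\widetilde{Z}=\{y\in\widetilde{\Omega}:J(y,\varphi^{-1})=0\}$, lets $\widetilde{S}$ be the null set off which $\varphi^{-1}$ has the Luzin $N$-property, shows via the change-of-variables formula that $|\varphi^{-1}(\widetilde{Z}\setminus\widetilde{S})|=0$ while $J(x,\varphi)=0$ a.e.\ on $\varphi^{-1}(\widetilde{S}\setminus\widetilde{Z})$ (a set mapped into a null set), and adopts the convention that the quotient equals $0$ wherever $J(x,\varphi)=0$; only then does the essential supremum bound follow. Without this (or an equivalent device) your ess-sup identity is unproven. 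A second, lesser issue: your sufficiency direction is only a sketch, and the natural way to ``run the chain backwards'' through Theorem~\ref{CompThD} applied to $\varphi$ is blocked in general, since it requires $p'>n-1$, which fails for $n\ge 3$ and $p\ge (n-1)^2/(n-2)$; the step from the hypotheses on $\varphi$ to $\varphi^{-1}\in W^1_{p,\loc}(\widetilde{\Omega})$ with finite distortion needs its own argument. (To be fair, the paper's written proof also details only the necessity half.)
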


\begin{proof}
Consider the inverse mapping $\psi:=\varphi^{-1}:\widetilde{\Omega}\to  \Omega$. The the inequality~(\ref{CE1}) is equivalent to the inequality
\begin{equation}
\label{CE2}
\cp_{p}(\psi^{-1}(F_0),\psi^{-1}(F_1);\widetilde{\Omega})\leq K_{p}^{p} \cp_{p}(F_0,F_1;\Omega).
\end{equation}
So, by \cite{U93,VU98}, the inverse mapping $\varphi^{-1}$ generates a bounded composition operator
$$
\left(\varphi^{-1}\right)^{\ast}:L^1_{p}(\Omega)\to L^1_{p}(\widetilde{\Omega})
$$
and is a $p$-quasiconformal mapping $\varphi^{-1}:\widetilde{\Omega}\to  \Omega$  \cite{GGR95,U93}. Hence the  mapping $\varphi^{-1}$ has the following properties \cite{U93,VU02}:

\medskip
\noindent
1. The mapping $\varphi^{-1}\in W^1_{p,\loc}(\widetilde{\Omega})$, has finite distortion and
$$
\left(\frac{|D\varphi^{-1}(y)|^{p}}{|J(y,\varphi^{-1})|}\right)^{\frac{1}{p}}\leq K_p\,\,\text{for almost all}\,\,y\in \widetilde{\Omega}.
$$

\medskip
\noindent
2. The mapping $\varphi^{-1}$ is differentiable a.e. in $\widetilde{\Omega}$.

\medskip
\noindent
3. The mapping $\varphi^{-1}$ possesses the Luzin $N^{-1}$-property if $n-1<p<n$ ($\varphi$ possesses the Luzin $N$-property).

\medskip
\noindent
4. The mapping $\varphi^{-1}$ possesses the Luzin $N$-property if $n<p<\infty$ ($\varphi$ possesses the Luzin $N^{-1}$-property).

\medskip
\noindent
5. The mapping $\varphi^{-1}$ possesses the Luzin $N$-property and the Luzin $N^{-1}$-property if $p=n$ \cite{V71}($\varphi$ possesses the Luzin $N^{-1}$-property).
\medskip

Now by Theorem~\ref{CompThD} the mapping $\varphi$ generates a bounded composition operator
$$
\varphi^{\ast}: L^1_{p'}(\widetilde{\Omega})\to L^1_{p'}(\Omega),\,\, p'={p}/{(p-n+1)}.
$$
Hence the mapping $\varphi\in W^1_{p',\loc}(\Omega)$, has finite distortion and is differentiable a.e. in $\Omega$ \cite{U93,VU98}.

Denote by $\widetilde{Z}=\{y\in\widetilde{\Omega}: J(y,\varphi^{-1}) =0\}$.
The set $\widetilde{S}\subset \widetilde{\Omega}$, $|\widetilde{S}|=0$, is the set such that on set $\widetilde{\Omega}\setminus \widetilde{S}$ the mapping $\varphi^{-1}:\widetilde{\Omega}\to  \Omega$ has the Luzin $N$-property \cite{H93}.

Then by the change of variables formula \cite{F69,H93} $|\varphi^{-1}(\widetilde{Z}\setminus \widetilde{S})|=0$ and on the set
$\varphi^{-1}(\widetilde{S}\setminus\widetilde{Z})$ we have $J(x,\varphi)=0$ for almost all $x\in \varphi^{-1}(\widetilde{S}\setminus\widetilde{Z})$. Hence, for almost all $x\in \Omega\setminus \varphi^{-1}(\widetilde{Z}\cup\widetilde{S})$ we have:
$$
|J(x,\varphi)|=|J(y,\varphi^{-1})|^{-1}, \,\,y=\varphi(x),
$$
and
$$
l(D\varphi(x))=|D\varphi^{-1}(y)|^{-1}, \,\,y=\varphi(x).
$$
Hence, setting
$$
\left(\frac{|J(x,\varphi)|}{l(D\varphi(x))^p}\right)^{\frac{1}{p}}=0
$$
on the set $Z=\{x\in\Omega: J(x,\varphi)\}$ we obtain
$$
\ess\sup\limits_{x\in\Omega}\left(\frac{|J(x,\varphi)|}{l(D\varphi(x))^p}\right)^{\frac{1}{p}}=
\ess\sup\limits_{y\in\widetilde{\Omega}}\left(\frac{|D\varphi^{-1}(\varphi^{-1}(y))|^{p}}{|J((\varphi^{-1}(y)),\varphi^{-1})|}\right)^{\frac{1}{p}}\leq
K_p<\infty.
$$

\end{proof}

\begin{rem}
\label{rem1}
The assertion of Theorem~\ref{pp} is correct in the case $1\leq p\leq n-1$ with additional assumptions that $\varphi\in W^1_{1,\loc}(\Omega)$ and $\varphi$ is differentiable a.e. in $\Omega$
\end{rem}

\section{On the  Lipschitz continuity of mapping of bounded $p$-capacitory distortion}
Now we consider the Lipschitz continuity of homeomorphic mappings of bounded $p$-capacitory distortion in the case $p=n-1$.

Let $(F_0,F_1)$  be a condenser in the domain $\Omega\subset \mathbb R^n$ such that ${{\cp}}_p(F_0,F_1;\Omega)<\infty$.
Suppose that a function $v$ belongs to $L_p^1(\Omega)$ is admissible for the condenser $(F_0,F_1)$. Then $v$ is called {\it an extremal function for the condenser $(F_0,F_1)$} \cite{VG77}, if
$$
\int\limits_{\Omega\backslash(F_0\cup F_1)}|\nabla v|^p\,dx=
{{\cp}}_p(F_0,F_1;\Omega).
$$
Note that for any $1<p<\infty$ and any condenser $(F_0,F_1)$ with ${{\cp}}_p(F_0,F_1;\Omega)<\infty$ the extremal function exists and unique.

The set of extremal functions for $p$-capacity of every possible pairs of $n$-dimensional connected compacts 
$F_0,\;F_1\subset \Omega$, having smooth boundaries, we denote by the symbol $E_p(\Omega)$. Then the following approximation holds.

\begin{thm}
\label{thm:ApproxExrem}
\cite{VG77}
Let $1<p<\infty$. Then there exists a countable collection of functions $v_i\in E_p(\Omega)$, $i\in \mathbb N$,
such that for every function $u\in L_{p}^{1}(\Omega)$ and for any $\varepsilon>0$
there exists a presentation of $u$ in the form
$u=c_0+\sum\limits_{i=1}^{\infty}c_iv_i$,
for which the inequalities 
$$
\|u\mid L_{p}^{1}(\Omega)\|^p\leq \sum\limits_{i=1}^{\infty}
\|c_iv_i\mid L_{p}^{1}(\Omega)\|^p\leq
        \|u\mid L_{p}^{1}(\Omega)\|^p+\varepsilon
$$ 
hold.
\end{thm}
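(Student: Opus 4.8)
The plan is to realize $u$ as a multi-scale superposition of capacity extremals built from its own level sets, and to read off the two-sided bound from the disjoint support of the gradients at each scale together with the variational (Euler--Lagrange) orthogonality of the extremals. First I would reduce to a convenient dense class: since the seminorm is $\|u\mid L^1_p(\Omega)\|^p=\int_\Omega|\nabla u|^p\,dx$ and constants lie in its kernel, and since $1<p<\infty$ makes $L^1_p(\Omega)$ (modulo constants) separable and uniformly convex, it suffices to treat bounded Lipschitz $u$ whose level sets $\{u=t\}$ are smooth for a.e.\ $t$ (Sard) after mollification, the seminorm error being charged to the $\varepsilon$-budget. I would then fix the countable dictionary $\{v_i\}\subset E_p(\Omega)$ by taking extremals of condensers formed from superlevel sets of a countable, seminorm-dense family of such functions, so that finite combinations $c_0+\sum_{i\le N}c_iv_i$ are dense in $L^1_p(\Omega)$.

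The engine is a single-scale layer-cake decomposition. For a fine partition $t_0<t_1<\dots<t_N$ of the range of $u$, let $v_k$ be the extremal of the condenser $(\{u\le t_k\},\{u\ge t_{k+1}\})$ and put $c_k=t_{k+1}-t_k$; the staircase $w=c_0+\sum_k c_kv_k$ equals $t_k+c_kv_k$ on the annulus $A_k=\{t_k<u<t_{k+1}\}$. Because each $\nabla v_k$ is supported in $\overline{A_k}$ and the $A_k$ are pairwise disjoint, the energies add exactly,
\[
\int_\Omega|\nabla w|^p\,dx=\sum_k c_k^p\int_{A_k}|\nabla v_k|^p\,dx=\sum_k\|c_kv_k\mid L^1_p(\Omega)\|^p,
\]
which is the source of the $\ell_p$-additivity. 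Since $c_kv_k$ minimizes the $p$-energy on $A_k$ with the boundary data of $u$, the remainder $\rho=u-w$ vanishes on every $\partial A_k$, and the Euler--Lagrange equation for $v_k$ gives $\int_\Omega|\nabla w|^{p-2}\nabla w\cdot\nabla\rho\,dx=0$; convexity of $t\mapsto\int_\Omega|\nabla(w+t\rho)|^p\,dx$ with vanishing first variation then yields $\|w\mid L^1_p(\Omega)\|\le\|u\mid L^1_p(\Omega)\|$, and refining the mesh gives $w\to u$ and $\sum_k\|c_kv_k\mid L^1_p(\Omega)\|^p\to\|u\mid L^1_p(\Omega)\|^p$.

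To obtain an exact series I would telescope successive refinements $w^{(1)},w^{(2)},\dots\to u$, flattening the differences $w^{(j)}-w^{(j-1)}$, each a finite extremal combination, into one series $u=c_0+\sum_i c_iv_i$. The left inequality $\|u\mid L^1_p(\Omega)\|^p\le\sum_i\|c_iv_i\mid L^1_p(\Omega)\|^p$ should come from the $p$-orthogonality above iterated across scales, so that same-sign gradient overlaps (which would otherwise force $\sum_i\|c_iv_i\mid L^1_p(\Omega)\|^p<\|u\mid L^1_p(\Omega)\|^p$) cannot occur, while the right inequality $\sum_i\|c_iv_i\mid L^1_p(\Omega)\|^p\le\|u\mid L^1_p(\Omega)\|^p+\varepsilon$ should come from choosing the refinement fast enough that the cross-scale interaction energy is summably small. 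The main obstacle is exactly this bookkeeping for $p\ne2$: unlike the Hilbert case $p=2$, where orthogonality gives an exact Pythagorean identity and hence $\varepsilon=0$, for general $1<p<\infty$ one only has the two-sided estimates furnished by the moduli of convexity and smoothness of $L^1_p(\Omega)$, so the accumulated non-orthogonality defect across the infinitely many extremal pieces must be quantitatively controlled and absorbed into $\varepsilon$. By contrast, the density of the dictionary and the single-scale disjoint-support additivity are routine.
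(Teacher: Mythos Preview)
The paper does not prove this theorem at all: it is stated with the citation \cite{VG77} and used as a black box in the subsequent argument (Theorem~\ref{theorem:CapacityDescPP}). There is therefore no ``paper's own proof'' to compare your proposal against.

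As to your sketch itself, the single-scale part is correct and is indeed the core of the Vodop'yanov--Gol'dshtein construction: with $v_k$ the extremal of the condenser $(\{u\le t_k\},\{u\ge t_{k+1}\})$ and $c_k=t_{k+1}-t_k$, the gradients $\nabla v_k$ live in pairwise disjoint annuli, so $\sum_k\|c_kv_k\mid L^1_p(\Omega)\|^p=\|w\mid L^1_p(\Omega)\|^p$ exactly, and extremality of each $v_k$ against the admissible competitor $(u-t_k)/c_k$ gives $\sum_k\|c_kv_k\mid L^1_p(\Omega)\|^p\le\|u\mid L^1_p(\Omega)\|^p$. Note the direction: this yields the \emph{upper} bound on the sum, not the lower one, and it does so without any Euler--Lagrange orthogonality argument.

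The gap is in your multi-scale step. You propose to telescope $u=w^{(1)}+\sum_j(w^{(j+1)}-w^{(j)})$ and to ``flatten'' each difference into extremal pieces; but $w^{(j+1)}-w^{(j)}$ is a difference of two distinct $p$-harmonic patchworks and is not itself a finite linear combination of capacity extremals of the required type, nor do its pieces have gradients with disjoint supports relative to earlier scales. Your appeal to ``$p$-orthogonality iterated across scales'' for the lower bound $\|u\mid L^1_p(\Omega)\|^p\le\sum_i\|c_iv_i\mid L^1_p(\Omega)\|^p$ is not justified: the Euler--Lagrange identity you wrote gives orthogonality of $\nabla w$ to $\nabla(u-w)$ at a fixed scale, not a sign condition on the cross-interactions between extremals from different scales, and for $p\ne2$ there is no Pythagorean mechanism to sum. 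In the original argument the exact representation and the lower inequality are obtained not by telescoping coarse approximations but by refining the single level-set partition so finely that the staircase $w$ itself converges to $u$ in $L^1_p(\Omega)$; then $\|w\mid L^1_p(\Omega)\|^p\to\|u\mid L^1_p(\Omega)\|^p$, and the two-sided estimate with the $\varepsilon$-slack follows from the single-scale identity together with a diagonal/density argument for the fixed countable dictionary. Your proposal would be repaired by replacing the telescoping with this direct refinement.
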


The following theorem was not formulated, but proved in \cite{U93} by using the approximation by extremal functions (see, also, \cite{VU98}).

\begin{thm}
\label{theorem:CapacityDescPP}
Let $1<p<\infty$.
A homeomorphism $\varphi :\Omega\to \widetilde{\Omega}$
generates a bounded composition operator
$$
\varphi^{\ast}: L^1_p(\widetilde{\Omega})\to L^1_p(\Omega)
$$
if and only if for every condenser 
$(F_0,F_1)\subset\widetilde{\Omega}$
the inequality
$$
\cp_{p}^{\frac{1}{p}}(\varphi^{-1}(F_0),\varphi^{-1}(F_1);\Omega)
\leq K_{p,p}(\varphi;\Omega)\cp_{p}^{\frac{1}{p}}(F_0,F_1;\widetilde{\Omega})
$$
holds. 
\end{thm}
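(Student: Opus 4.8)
The plan is to prove the two implications separately, treating the forward direction as essentially a change-of-admissible-functions computation and reserving the real work for the converse, where Theorem~\ref{thm:ApproxExrem} is the decisive tool.

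\emph{Necessity.} Suppose $\varphi^{\ast}\colon L^1_p(\widetilde{\Omega})\to L^1_p(\Omega)$ is bounded with norm $K_{p,p}(\varphi;\Omega)$. Fix a condenser $(F_0,F_1)\subset\widetilde{\Omega}$ and let $u$ be admissible for it. Since $\varphi$ is a homeomorphism, $u\circ\varphi=\varphi^{\ast}(u)$ equals $i$ on a neighborhood of $\varphi^{-1}(F_i)$ and the connected-component condition defining admissibility is transported by $\varphi^{-1}$; hence $\varphi^{\ast}(u)$ is admissible for $(\varphi^{-1}(F_0),\varphi^{-1}(F_1))\subset\Omega$. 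Therefore
$$
\cp_{p}^{\frac1p}(\varphi^{-1}(F_0),\varphi^{-1}(F_1);\Omega)\leq \|\varphi^{\ast}(u)\mid L^1_p(\Omega)\|\leq K_{p,p}(\varphi;\Omega)\,\|u\mid L^1_p(\widetilde{\Omega})\|.
$$
Minimizing the right-hand side over admissible $u$, i.e. taking $u$ to be the extremal function so that $\|u\mid L^1_p(\widetilde{\Omega})\|=\cp_p^{1/p}(F_0,F_1;\widetilde{\Omega})$, yields the stated inequality.

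\emph{Sufficiency.} This is the substantial direction. Given $u\in L^1_p(\widetilde{\Omega})$, which I may take continuous after approximation, I would apply Theorem~\ref{thm:ApproxExrem} in \emph{both} domains along matching level-set decompositions. The level sets $\{u\circ\varphi\geq t\}=\varphi^{-1}\{u\geq t\}$ show that the canonical condensers $\tilde C_i\subset\Omega$ attached to $\varphi^{\ast}(u)$ are exactly the preimages $\varphi^{-1}(C_i)$ of the condensers $C_i\subset\widetilde{\Omega}$ attached to $u$, with identical coefficients $c_i$ (the constant term $c_0$ is irrelevant to the seminorm). Writing $\tilde v_i$ for the extremal function of $\tilde C_i$, so that $\|c_i\tilde v_i\mid L^1_p(\Omega)\|^p=|c_i|^p\cp_p(\tilde C_i;\Omega)$, I would use the lower estimate of Theorem~\ref{thm:ApproxExrem} for $\varphi^{\ast}(u)$, then the hypothesis term by term, then the upper estimate of Theorem~\ref{thm:ApproxExrem} for $u$:
\begin{align*}
\|\varphi^{\ast}(u)\mid L^1_p(\Omega)\|^p
&\leq\sum_i\|c_i\tilde v_i\mid L^1_p(\Omega)\|^p
=\sum_i|c_i|^p\cp_p(\varphi^{-1}(C_i);\Omega)\\
&\leq K_{p,p}^p\sum_i|c_i|^p\cp_p(C_i;\widetilde{\Omega})
\leq K_{p,p}^p\bigl(\|u\mid L^1_p(\widetilde{\Omega})\|^p+\varepsilon\bigr).
\end{align*}
Letting $\varepsilon\to0$ gives $\|\varphi^{\ast}(u)\mid L^1_p(\Omega)\|\leq K_{p,p}\|u\mid L^1_p(\widetilde{\Omega})\|$, i.e. the boundedness of $\varphi^{\ast}$.

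The step I expect to be the main obstacle is precisely what forces the detour through the decomposition rather than a naive comparison: composing the extremal function $v_i$ of $C_i$ with $\varphi$ produces only an \emph{admissible} (not extremal) function for $\varphi^{-1}(C_i)$, so a termwise estimate on $v_i\circ\varphi$ overshoots in the wrong direction. The argument circumvents this by decomposing $\varphi^{\ast}(u)$ with respect to its own extremal functions in $\Omega$ and transporting only the condensers, not the functions, across $\varphi$. Making this rigorous requires two points of care: first, that the decomposition of Theorem~\ref{thm:ApproxExrem} is the canonical level-set one, so that the condensers for $u$ and for $u\circ\varphi$ correspond under $\varphi^{-1}$ with equal coefficients; and second, the apparent circularity in applying the theorem to $\varphi^{\ast}(u)$ before knowing $\varphi^{\ast}(u)\in L^1_p(\Omega)$, which I would resolve by running the estimate first on the finite canonical approximants, extracting the uniform bound above, and then passing to the limit via lower semicontinuity of the seminorm to conclude simultaneously that $\varphi^{\ast}(u)\in L^1_p(\Omega)$ and that the norm bound holds. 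The near-orthogonality of the layer gradients encoded in the two-sided estimate of Theorem~\ref{thm:ApproxExrem} is exactly what makes the passage from condensers to the global seminorm lossless in both domains.
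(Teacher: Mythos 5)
First, a caveat about the comparison: the paper does not actually prove Theorem~\ref{theorem:CapacityDescPP}; it only records that the statement ``was not formulated, but proved in \cite{U93}'' by approximation with extremal functions (see also \cite{VU98}). So your proposal can only be measured against that method. Your necessity half is correct and standard (the composition of an admissible function is admissible for the preimage condenser, then take the infimum). The sufficiency half, however, has a genuine gap at its central inequality $\|\varphi^{\ast}(u)\mid L^1_p(\Omega)\|^p\leq\sum_i|c_i|^p\cp_p(\varphi^{-1}(C_i);\Omega)$. Theorem~\ref{thm:ApproxExrem} is a bare existence statement relative to one fixed countable family of extremal functions in a single domain; it carries no naturality under homeomorphisms. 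Nothing in it guarantees that $\varphi^{\ast}(u)$ admits a presentation whose condensers are exactly the preimages $\varphi^{-1}(C_i)$ of the condensers attached to the presentation of $u$, with the same coefficients $c_i$: your phrase ``the canonical level-set decomposition'' imports structure from the (unseen) proof of Theorem~\ref{thm:ApproxExrem}, not from its statement. Moreover, as you yourself note, applying that theorem to $\varphi^{\ast}(u)$ presupposes $\varphi^{\ast}(u)\in L^1_p(\Omega)$, which is the conclusion being sought.

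Your proposed repair of the circularity does not close this gap, because it bounds the wrong function. The pointwise identity is $\varphi^{\ast}(u)=c_0+\sum_i c_i(v_i\circ\varphi)$, while the finite approximants you can actually estimate are $c_0+\sum_{i\leq N}c_i\tilde v_i$, where $\tilde v_i$ is the extremal function of $\varphi^{-1}(C_i)$. Since $\tilde v_i$ and $v_i\circ\varphi$ agree only on the plates (you observe yourself that $v_i\circ\varphi$ is not extremal), these approximants do not converge to $\varphi^{\ast}(u)$, and lower semicontinuity of the seminorm then yields a bound for a limit function that coincides with $\varphi^{\ast}(u)$ only on the union of the plates. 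The missing ingredient---the actual core of the argument in \cite{U93,VU98}---is a termwise lemma for a single extremal function: if $v$ is extremal for a condenser $(F_0,F_1)\subset\widetilde{\Omega}$, then $v\circ\varphi\in L^1_p(\Omega)$ and $\|v\circ\varphi\mid L^1_p(\Omega)\|\leq K_{p,p}(\varphi;\Omega)\,\|v\mid L^1_p(\widetilde{\Omega})\|$. One proves this by refinement: subdivide $[0,1]$ by levels $t_0<\dots<t_m$; rescaled truncations of $v$ between consecutive levels are again extremal, so the capacities of the level condensers of $v$ sum (with weights $(t_j-t_{j-1})^p$) to $\|v\|^p$; apply the capacity hypothesis to each preimage level condenser; glue the extremal functions of these preimage condensers into a single function $W^{(m)}$ whose gradient lives a.e.\ in the pairwise disjoint layers, so that $\|W^{(m)}\mid L^1_p(\Omega)\|^p\leq K_{p,p}^p\|v\|^p$, while $|W^{(m)}-v\circ\varphi|\leq\max_j(t_j-t_{j-1})$ quasi-everywhere; finally let the partition refine and use weak compactness of bounded sets in $L_p$ ($p>1$) to conclude that the uniform limit $v\circ\varphi$ lies in $L^1_p(\Omega)$ with the desired bound. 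It is precisely this refinement that upgrades ``agreement on the plates'' to agreement everywhere. Once this lemma is in hand, Theorem~\ref{thm:ApproxExrem} applied in $\widetilde{\Omega}$ alone (not in both domains) finishes the proof along the lines you sketch.
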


Now by using the capacitory characterization of composition operators on Sobolev spaces we prove the Lipschitz continuity of homeomorphic mappings of bounded $(n-1)$-capacitory distortion, that extends the result by F.~W.~Gehring \cite{Ger69}.

\begin{thm}\label{ppl}
Let  $\varphi: \Omega \to \widetilde{\Omega}$ be a homeomorphic mapping of bounded $(n-1)$-capacitory distortion.
Then $\varphi$ belongs to the Sobolev space $L^1_{\infty}(\Omega)$.
\end{thm}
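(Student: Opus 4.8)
The plan is to reduce, through the inverse mapping, to the endpoint of the composition--operator machinery and then to extract a pointwise bound on $|D\varphi|$ by elementary linear algebra. First I would pass to $\psi:=\varphi^{-1}\colon\widetilde\Omega\to\Omega$. Applying Theorem~\ref{theorem:CapacityDescPP} to $\psi$ with $p=n-1$, and using $\psi^{-1}=\varphi$, the hypothesis that $\varphi$ has bounded $(n-1)$-capacitory distortion says precisely that $\varphi^{-1}$ generates a bounded composition operator
$$
\left(\varphi^{-1}\right)^{\ast}\colon L^1_{n-1}(\Omega)\to L^1_{n-1}(\widetilde\Omega).
$$
Theorem~\ref{CompTh} with $p=q=n-1$ (so $\kappa=\infty$) then gives $\varphi^{-1}\in W^1_{n-1,\loc}(\widetilde\Omega)$, that $\varphi^{-1}$ has finite distortion and is differentiable almost everywhere, and that
$$
|D\varphi^{-1}(y)|\leq K\,|J(y,\varphi^{-1})|^{\frac{1}{n-1}}\qquad\text{for almost all }y\in\widetilde\Omega;
$$
that is, $\varphi^{-1}$ is an $(n-1)$-quasiconformal mapping enjoying the Luzin properties recorded in the proof of Theorem~\ref{pp}.

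Next I would transfer this information to $\varphi$, following the change--of--variables step in the proof of Theorem~\ref{pp}. Put $\widetilde Z=\{y:J(y,\varphi^{-1})=0\}$ and let $\widetilde S\subset\widetilde\Omega$, $|\widetilde S|=0$, be a set off which $\varphi^{-1}$ has the Luzin $N$-property. Granting that $\varphi$ is weakly differentiable (the delicate point, addressed below), the change of variables yields $|\varphi^{-1}(\widetilde Z\setminus\widetilde S)|=0$ and $J(x,\varphi)=0$ almost everywhere on $\varphi^{-1}(\widetilde S\setminus\widetilde Z)$, while for almost every $x\in\Omega\setminus\varphi^{-1}(\widetilde Z\cup\widetilde S)$ one has the inverse--derivative relations
$$
|J(x,\varphi)|=|J(y,\varphi^{-1})|^{-1},\qquad l(D\varphi(x))=|D\varphi^{-1}(y)|^{-1},\qquad y=\varphi(x).
$$
Substituting these into the distortion bound for $\varphi^{-1}$ turns it into the a.e. estimate
$$
\frac{|J(x,\varphi)|}{l(D\varphi(x))^{n-1}}=\frac{|D\varphi^{-1}(y)|^{n-1}}{|J(y,\varphi^{-1})|}\leq K^{n-1}.
$$

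I would then read off the Lipschitz bound from the singular values $\sigma_1\geq\sigma_2\geq\dots\geq\sigma_n\geq 0$ of $D\varphi(x)$, for which $|D\varphi(x)|=\sigma_1$, $l(D\varphi(x))=\sigma_n$ and $|J(x,\varphi)|=\sigma_1\sigma_2\cdots\sigma_n$. Indeed,
$$
\frac{|J(x,\varphi)|}{l(D\varphi(x))^{n-1}}=\frac{\sigma_1\sigma_2\cdots\sigma_{n-1}}{\sigma_n^{\,n-2}}\geq\sigma_1=|D\varphi(x)|,
$$
since each of the $n-2$ factors $\sigma_2,\dots,\sigma_{n-1}$ is at least $\sigma_n$. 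Combined with the preceding display this gives $|D\varphi(x)|\leq K^{n-1}$ for almost all $x\in\Omega$; together with the weak differentiability and the continuity of $\varphi$ this places $\varphi$ in $L^1_\infty(\Omega)$, as required.

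I expect the genuine obstacle to lie in the transfer step, that is, in securing the weak differentiability of $\varphi$ precisely at the endpoint $p=n-1$. For $p>n-1$ this is exactly what the duality Theorem~\ref{CompThD} provides, yielding $\varphi\in W^1_{p',\loc}(\Omega)$ with $p'=p/(p-n+1)$; but at $p=n-1$ one has $p'=\infty$ and Theorem~\ref{CompThD} is unavailable, since it requires the strict inequality $q>n-1$. Hence the inverse--derivative relations cannot be obtained by simply dualizing the operator, and the heart of the argument is to justify them directly from the almost everywhere differentiability, finite distortion and Luzin properties of the $(n-1)$-quasiconformal mapping $\varphi^{-1}$, while controlling the degenerate sets $\widetilde Z$ and $\widetilde S$ on which the change of variables could fail. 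Once the weak differentiability of $\varphi$ and the a.e. bound $|D\varphi|\leq K^{n-1}$ are in hand, the passage to $L^1_\infty(\Omega)$ is immediate.
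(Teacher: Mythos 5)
Your proposal follows the paper up to the point where $\varphi^{-1}$ is identified as an $(n-1)$-quasiconformal mapping generating a bounded composition operator on $L^1_{n-1}$ (via Theorem~\ref{theorem:CapacityDescPP} and Theorem~\ref{CompTh}); this part is fine and coincides with the paper's first step. But from there your argument has a genuine gap, which you yourself flag and then never close: the change-of-variables transfer of the distortion bound from $\varphi^{-1}$ to $\varphi$ presupposes that $\varphi$ is weakly differentiable, and at the endpoint $p=n-1$ there is no source for this fact in your argument. For $p>n-1$ the paper gets it from the duality Theorem~\ref{CompThD}, which is exactly what fails at $q=n-1$; your proposal acknowledges this, declares that ``the heart of the argument is to justify'' the inverse-derivative relations directly, and stops. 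That deferred step is not a technicality: a homeomorphism that is differentiable a.e.\ with $|D\varphi|\leq K^{n-1}$ a.e.\ need not be Lipschitz at all (one-dimensional Cantor-type examples, suitably extended, show that an a.e.\ pointwise bound without the ACL property yields nothing), so your final sentence ``once the a.e.\ bound is in hand, the passage to $L^1_\infty$ is immediate'' is false without weak differentiability, which is precisely the assertion being proved. A secondary problem is that you also borrow the a.e.\ differentiability and Luzin $N$-property of $\varphi^{-1}$ from the list in the proof of Theorem~\ref{pp}, but those properties are recorded there only for $p>n-1$ (item 3 requires $n-1<p<n$), so even the ingredients you feed into the change of variables are not secured at $p=n-1$.

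The paper's proof sidesteps all of this and never differentiates $\varphi$ by hand. Its key ingredient, which is the idea missing from your proposal, is the endpoint duality result of Gol'dshtein--Ukhlov \cite{GU17}: if $\varphi^{-1}$ generates a bounded composition operator $L^1_{n-1}(\Omega)\to L^1_{n-1}(\widetilde\Omega)$, then $\varphi$ generates a bounded composition operator
$$
\varphi^{\ast}\colon L^1_{\infty}(\widetilde\Omega)\to L^1_{\infty}(\Omega),
\qquad
\|\varphi^{\ast}(f)\mid L^1_{\infty}(\Omega)\|\leq K_{n-1}^{n-1}\,\|f\mid L^1_{\infty}(\widetilde\Omega)\|.
$$
This is exactly the limiting case $p'=q'=\infty$ of Theorem~\ref{CompThD} that you correctly observed is unavailable from that theorem as stated. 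With the operator in hand, one simply substitutes the coordinate functions $f=y_i$, $i=1,\dots,n$, as test functions: each $\varphi_i=\varphi^{\ast}(y_i)$ lies in $L^1_{\infty}(\Omega)$ with $\|\varphi_i\mid L^1_{\infty}(\Omega)\|\leq K_{n-1}^{n-1}$, and $\varphi\in L^1_{\infty}(\Omega)$ follows at once, weak differentiability included. If you want to salvage your route, the work you would have to do is essentially a reproof of the cited result from \cite{GU17}; citing it (or proving it) is the step your proposal is missing.
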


\begin{proof}
Let  $\varphi: \Omega \to \widetilde{\Omega}$ be a homeomorphic mapping of bounded $(n-1)$-capacitory distortion.
Consider the inverse mapping $\psi:=\varphi^{-1}:\widetilde{\Omega}\to  \Omega$. Then the inequality~(\ref{CE1}) is equivalent to the inequality
\begin{equation}
\label{CE3}
\cp_{n-1}(\psi^{-1}(F_0),\psi^{-1}(F_1);\widetilde{\Omega})\leq K_{n-1}^{n-1} \cp_{n-1}(F_0,F_1;\Omega).
\end{equation}

So, by Theorem~\ref{theorem:CapacityDescPP}, the inverse mapping $\varphi^{-1}$ generates a bounded composition operator
$$
\left(\varphi^{-1}\right)^{\ast}:L^1_{n-1}(\Omega)\to L^1_{n-1}(\widetilde{\Omega})
$$
and is a $(n-1)$-quasiconformal mapping $\varphi^{-1}:\widetilde{\Omega}\to  \Omega$  \cite{GGR95,U93}:
$$
\ess\sup\limits_{y\in \widetilde{\Omega}}\left(\frac{|D\varphi^{-1}(y)|^{n-1}}{J(y,\varphi^{-1})|}\right)^{\frac{1}{n-1}}=K_{n-1}(\varphi^{-1};\widetilde{\Omega})<\infty.
$$

Hence, by \cite{GU17}, the mapping $\varphi: \Omega \to \widetilde{\Omega}$ generates a bounded composition operator
$$
\varphi^{\ast}: L^1_{\infty}(\widetilde{\Omega})\to L^1_{\infty}(\Omega),
$$
and the inequality
\begin{equation}
\label{test}
\|\varphi^{\ast}(f)\mid L^1_{\infty}(\Omega)\|\leq K_{n-1}^{n-1} \|f\mid L^1_{\infty}(\widetilde{\Omega)}\|
\end{equation}
holds for any function $f\mid L^1_{\infty}(\widetilde{\Omega)}$.

Now substituting in the inequality~(\ref{test}) the test functions $f=y_i$, $y=1,...,n$, 
where $y_i$ is the $i$-coordinate of $y\in\widetilde{\Omega}$, we have  $\varphi_i\in L^1_{\infty}(\Omega)$, $i=1,...,n$, and 
$$
\|\varphi_i\mid L^1_{\infty}(\Omega)\|\leq K_{n-1}^{n-1}, \,\,i=1,...,n.
$$

Hence, the mapping $\varphi: \Omega \to \widetilde{\Omega}$ belongs to the Sobolev space $L^1_{\infty}(\Omega)$.

\end{proof}

\section{On differentiability of mapping of bounded $p$-capacitory distortion}

By using Theorem~\ref{ppl} and \cite[Theorem~3]{Ger69} we have the following assertion 

\begin{thm}
\label{lip}
Let  $\varphi: \Omega \to \widetilde{\Omega}$ be a homeomorphic mapping of bounded $p$-capacitory distortion, $n-1\leq p< n$.
Then $\varphi$ is a locally Lipschitz mappings and is differentiable a.e. in $\Omega$.
\end{thm}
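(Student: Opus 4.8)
The plan is to transfer the problem to the inverse mapping and then exploit the duality of Sobolev exponents at the degenerate endpoint $p'=\infty$ that corresponds to $p=n-1$. First I would pass to the inverse homeomorphism $\psi:=\varphi^{-1}:\widetilde{\Omega}\to\Omega$ and rewrite the defining inequality~(\ref{CE1}) for bounded $(n-1)$-capacitory distortion as
\begin{equation*}
\cp_{n-1}(\psi^{-1}(F_0),\psi^{-1}(F_1);\widetilde{\Omega})\leq K_{n-1}^{n-1}\,\cp_{n-1}(F_0,F_1;\Omega),
\end{equation*}
valid for every condenser $(F_0,F_1)\subset\Omega$. This is exactly the capacitory condition appearing in Theorem~\ref{theorem:CapacityDescPP} with $p=n-1$, so applying that theorem yields that $\varphi^{-1}$ generates a bounded composition operator $(\varphi^{-1})^{\ast}:L^1_{n-1}(\Omega)\to L^1_{n-1}(\widetilde{\Omega})$. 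By the quasiconformal theory this means $\varphi^{-1}$ is an $(n-1)$-quasiconformal mapping whose $(n-1)$-dilatation is bounded by $K_{n-1}$ almost everywhere.

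The decisive step is the limiting duality. The general rule $p'=p/(p-n+1)$ degenerates to $p'=\infty$ precisely at $p=n-1$, so one expects the $(n-1)$-quasiconformality of $\varphi^{-1}$ to force $\varphi$ itself to generate a bounded composition operator $\varphi^{\ast}:L^1_{\infty}(\widetilde{\Omega})\to L^1_{\infty}(\Omega)$, with operator norm controlled by $K_{n-1}^{n-1}$. I would establish this by invoking the endpoint composition-operator theory for $p=\infty$ developed in \cite{GU10,GU17}, applied to the inverse $(n-1)$-quasiconformal mapping. I expect this to be the main obstacle: the space $L^1_\infty$ is not reflexive and the usual duality between $L^1_p$ and its conjugate breaks down, so the endpoint cannot be obtained by a naive passage to the limit $p\to n-1$ but must be treated directly as the limit case of the duality in Theorem~\ref{CompThD}.

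Once this operator bound is in hand, the conclusion follows by testing against the coordinate functions. For each $i=1,\dots,n$ the function $f=y_i$, the $i$-th coordinate of $y\in\widetilde{\Omega}$, lies in $L^1_{\infty}(\widetilde{\Omega})$, since its gradient is a unit coordinate vector and hence $\|y_i\mid L^1_\infty(\widetilde{\Omega})\|=1$. Moreover $\varphi^{\ast}(y_i)=y_i\circ\varphi=\varphi_i$ is exactly the $i$-th coordinate of the mapping $\varphi$. The boundedness of the composition operator then gives $\|\varphi_i\mid L^1_\infty(\Omega)\|\leq K_{n-1}^{n-1}$ for every $i$, so each coordinate of $\varphi$ has essentially bounded gradient. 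By the definition of the homogeneous Sobolev space for mappings this is precisely the assertion $\varphi\in L^1_\infty(\Omega)$, which completes the plan.
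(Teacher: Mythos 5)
Your proposal proves only the endpoint case $p=n-1$ of the statement, and there it follows essentially the same route as the paper's Theorem~\ref{ppl}: pass to the inverse homeomorphism, invoke the capacitory characterization of composition operators (Theorem~\ref{theorem:CapacityDescPP}) to see that $\varphi^{-1}$ is $(n-1)$-quasiconformal, use the endpoint result of \cite{GU17} to obtain the bounded operator $\varphi^{\ast}:L^1_{\infty}(\widetilde{\Omega})\to L^1_{\infty}(\Omega)$, and test with the coordinate functions $y_i$. That part of the plan is sound, and you correctly identify the passage to $L^1_\infty$ as the step that cannot be done by naive duality.

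The genuine gap is that the theorem is stated for the whole range $n-1\leq p<n$, and your argument does not touch the case $n-1<p<n$. For those $p$ the duality exponent $p'=p/(p-n+1)$ is finite, so your scheme only yields a bounded operator $\varphi^{\ast}:L^1_{p'}(\widetilde{\Omega})\to L^1_{p'}(\Omega)$ and hence $\varphi\in W^1_{p',\loc}(\Omega)$ -- Sobolev regularity, not an essentially bounded gradient. Testing with coordinate functions then gives $\varphi_i\in L^1_{p'}(\Omega)$, which is far from Lipschitz continuity. The paper closes this range by citing Gehring's result \cite[Theorem~3]{Ger69}, which proves Lipschitz continuity of mappings of bounded $p$-capacitory distortion directly for $n-1<p<n$ (alternatively, one can argue via the pointwise dilatation and Jacobian bounds as in the corollaries of Section~4, but some such additional input is unavoidable). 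A secondary, smaller gap: even at $p=n-1$ you stop at $\varphi\in L^1_{\infty}(\Omega)$, whereas the claimed conclusions are local Lipschitz continuity and a.e.\ differentiability; you still need the standard facts that a $W^1_{1,\loc}$ mapping with essentially bounded weak gradient is locally Lipschitz, and then Rademacher's theorem to get differentiability almost everywhere.
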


In the work \cite{Ger69} it was proved the following estimate of Jacobians of mapping of bounded $p$-capacitory distortion.

\begin{lem}\label{Jacp}
Let  $\varphi: \Omega \to \widetilde{\Omega}$ be a homeomorphic mapping of bounded $p$-capacitory distortion, $1\leq p<n$.
Then $|J(x, \varphi)|\leq (K^p_p)^{n/(n-p)}$.
\end{lem}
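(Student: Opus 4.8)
The plan is to adapt Gehring's original argument \cite{Ger69}: test the capacity inequality~(\ref{CE1}) on small spherical ring condensers and convert it into a bound on the local volume distortion of $\varphi$, which then controls the Jacobian pointwise almost everywhere.

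Fix a point $x_0\in\Omega$ and, for radii $0<r<R$ with $\overline{B(x_0,R)}\subset\Omega$, apply~(\ref{CE1}) to the condenser $(\overline{B(x_0,r)},\,\Omega\setminus B(x_0,R))$. On the source side, restricting the radial extremal of the whole-space ring gives, by domain monotonicity,
$$
\cp_{p}(\overline{B(x_0,r)},\Omega\setminus B(x_0,R);\Omega)\le \cp_{p}(\overline{B(x_0,r)},\mathbb R^n\setminus B(x_0,R);\mathbb R^n)=\omega_{n-1}\left(\frac{n-p}{p-1}\right)^{p-1}r^{n-p}\left(1-(r/R)^{\frac{n-p}{p-1}}\right)^{-(p-1)},
$$
where $\omega_{n-1}=|S^{n-1}|$ and $1<p<n$. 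Choosing $R\to0$ with $r/R\to0$ (for instance $R=\sqrt r$) makes the last factor tend to $1$, so the source capacity equals $\omega_{n-1}\left(\frac{n-p}{p-1}\right)^{p-1}r^{n-p}(1+o(1))$.

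On the image side I would discard the outer plate: for any admissible $u$, the function $w=1-u$ is a test function for the variational capacity of the compactum $\varphi(\overline{B(x_0,r)})$ in $\widetilde\Omega$, so
$$
\cp_{p}(\varphi(\overline{B(x_0,r)}),\varphi(\Omega\setminus B(x_0,R));\widetilde\Omega)\ge \cp_{p}(\varphi(\overline{B(x_0,r)});\mathbb R^n),
$$
the final step by domain monotonicity ($\widetilde\Omega\subset\mathbb R^n$). The decisive analytic input is the isoperimetric (symmetrization) inequality for $p$-capacity: among compacta of a given volume the ball minimizes the capacity to infinity, whence
$$
\cp_{p}(\varphi(\overline{B(x_0,r)});\mathbb R^n)\ge \omega_{n-1}\left(\frac{n-p}{p-1}\right)^{p-1}\left(\frac{n\,|\varphi(B(x_0,r))|}{\omega_{n-1}}\right)^{\frac{n-p}{n}}.
$$
Inserting these three estimates into~(\ref{CE1}), the common constant $\omega_{n-1}\left(\frac{n-p}{p-1}\right)^{p-1}$ cancels, and after raising to the power $n/(n-p)$ and using $|B(x_0,r)|=\omega_{n-1}r^n/n$ I obtain
$$
|\varphi(B(x_0,r))|\le (K_p^p)^{\frac{n}{n-p}}\,(1+o(1))\,|B(x_0,r)|\qquad(r\to0).
$$

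Finally, letting $r\to0$ gives $\limsup_{r\to0}|\varphi(B(x_0,r))|/|B(x_0,r)|\le (K_p^p)^{n/(n-p)}$ at every $x_0$. Since $\varphi$ is differentiable almost everywhere, at each point of differentiability the same limit is bounded below by $|J(x_0,\varphi)|$, and therefore $|J(x_0,\varphi)|\le (K_p^p)^{n/(n-p)}$ for almost all $x_0$, as claimed; the endpoint $p=1$ is treated identically, the explicit ring capacity being replaced by the classical isoperimetric inequality for the perimeter. I expect the main obstacle to be the symmetrization step, together with the careful bookkeeping of the two monotonicity inequalities and the passage $r\to0$, whereas the identification of the volume derivative with $|J(x_0,\varphi)|$ is then routine given almost-everywhere differentiability.
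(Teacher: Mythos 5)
The paper contains no proof of this lemma at all -- it simply attributes the statement to Gehring and cites \cite{Ger69} -- and your argument is precisely the Gehring-style ring-plus-symmetrization argument that the citation points to, so in approach you match the (cited) source. The computational core is correct: restricting the whole-space radial extremal gives the upper bound $\omega_{n-1}\bigl(\frac{n-p}{p-1}\bigr)^{p-1}r^{n-p}(1+o(1))$ for the source ring (with $R=\sqrt r$, say), discarding the outer plate and extending test functions by zero shows that $\cp_p(\varphi(\overline{B(x_0,r)});\mathbb R^n)$ is a lower bound for the image condenser, the isocapacitary inequality converts this into a bound in terms of $|\varphi(B(x_0,r))|$, and after the common constant cancels you obtain
\[
\limsup\limits_{r\to 0}\frac{|\varphi(B(x_0,r))|}{|B(x_0,r)|}\le \left(K_p^p\right)^{\frac{n}{n-p}}\quad\text{at every point }x_0\in\Omega .
\]
This part, including the constants, checks out.

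The gap is the sentence ``Since $\varphi$ is differentiable almost everywhere.'' That is not available in the full range $1\le p<n$ claimed by the lemma. Almost everywhere differentiability of a mapping of bounded $p$-capacitory distortion is known for $n-1\le p<n$ by Theorem~\ref{lip} (whose proof relies on Theorem~\ref{ppl} and on Gehring's Theorem~3, not on this lemma, so you may invoke it without circularity). But for $1\le p<n-1$ no such regularity theorem exists: the paper's own Remark~\ref{rem1} must \emph{assume} $\varphi\in W^1_{1,\loc}(\Omega)$ and a.e.~differentiability in exactly that range, and Gehring's example shows that even Lipschitz continuity fails there. So your final step is unjustified precisely where the lemma goes beyond $p\ge n-1$. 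The repair is one of the following: restrict the Jacobian conclusion to $n-1\le p<n$, which is the only range in which the paper actually uses the lemma; or add the hypotheses of Remark~\ref{rem1}; or -- cleanest -- state the conclusion for the volume derivative $\limsup_{r\to0}|\varphi(B(x,r))|/|B(x,r)|$, which exists a.e.~for any homeomorphism, dominates $|J(x,\varphi)|$ at every point of differentiability (and a.e.~on the set of approximate differentiability, by the area formula for injective maps), and which your argument bounds at \emph{every} point with no differentiability needed. The $p=1$ endpoint, which you delegate to the isoperimetric inequality for the perimeter, is a harmless loose end.
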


By using Lemma~\ref{Jacp} we obtain estimates of the Lipschitz constants of homeomorphic mapping of bounded $p$-capacitory distortion.

\begin{cor}
Let  $\varphi: \Omega \to \widetilde{\Omega}$ be a homeomorphic mapping of bounded $p$-capacitory distortion, $n-1\leq p<n$.
Then $|D\varphi(x)|\leq (K^p_{p})^{1/(n-p)}$.
\end{cor}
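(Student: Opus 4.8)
The plan is to establish the bound pointwise for almost every $x\in\Omega$ by combining two ingredients already available in the paper: the inner $p$-dilatation estimate of Theorem~\ref{pp} and the Jacobian bound of Lemma~\ref{Jacp}, joined through an elementary singular-value inequality. No limiting or approximation argument is needed; the corollary is a pointwise consequence of facts proved earlier.

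First I would record the regularity and set up what must be proved. Since $n-1\leq p<n$, Theorem~\ref{lip} guarantees that $\varphi$ is locally Lipschitz, hence lies in $W^1_{\infty,\loc}(\Omega)\subset W^1_{1,\loc}(\Omega)$, and is differentiable almost everywhere in $\Omega$. Thus $D\varphi(x)$ and $J(x,\varphi)$ are defined for a.e.\ $x$, and it suffices to establish the stated bound on $|D\varphi(x)|$ for a.e.\ $x$. Because $\varphi\in W^1_{1,\loc}(\Omega)$ is differentiable a.e., the hypotheses of Remark~\ref{rem1} are satisfied, so Theorem~\ref{pp} applies throughout the range $n-1\leq p<n$ (including the endpoint $p=n-1$) and yields, for a.e.\ $x\in\Omega$, the inner $p$-dilatation estimate $|J(x,\varphi)|\leq K_p^p\,l(D\varphi(x))^p$, equivalently $l(D\varphi(x))\geq |J(x,\varphi)|^{1/p}/K_p$.

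Next I would bring in a linear-algebra inequality to pass from $l(D\varphi)$ to $|D\varphi|$. Writing the singular values of $D\varphi(x)$ as $0\leq\sigma_1\leq\cdots\leq\sigma_n$, one has $l(D\varphi(x))=\sigma_1$, $|D\varphi(x)|=\sigma_n$, and $|J(x,\varphi)|=\prod_i\sigma_i\geq\sigma_n\sigma_1^{\,n-1}=|D\varphi(x)|\,l(D\varphi(x))^{n-1}$. Substituting the lower bound $l(D\varphi(x))\geq |J(x,\varphi)|^{1/p}/K_p$ and cancelling a power of $|J(x,\varphi)|$ gives, for a.e.\ $x$,
$$
|D\varphi(x)|\leq K_p^{\,n-1}\,|J(x,\varphi)|^{\frac{p-n+1}{p}} .
$$

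Finally I would insert Lemma~\ref{Jacp}, namely $|J(x,\varphi)|\leq (K_p^p)^{n/(n-p)}$, into this inequality and simplify the resulting power of $K_p$: a direct computation of the exponent gives $(n-1)+\frac{n(p-n+1)}{n-p}=\frac{p}{n-p}$, whence $|D\varphi(x)|\leq K_p^{\,p/(n-p)}=(K_p^p)^{1/(n-p)}$ for a.e.\ $x$, as claimed. I do not expect a genuine analytic obstacle: the three displayed inequalities are either quoted from earlier results or elementary, so the only points demanding care are the exponent bookkeeping in the last step and the verification, already addressed above via Remark~\ref{rem1}, that the inner-dilatation form of the distortion inequality is legitimately available at the endpoint $p=n-1$. (An even shorter route bypasses Lemma~\ref{Jacp}: from $|D\varphi|\,l(D\varphi)^{n-1}\leq K_p^p\,l(D\varphi)^p$ together with $l(D\varphi)\leq|D\varphi|$ and $p-n+1\geq 0$ one obtains $|D\varphi|^{\,n-p}\leq K_p^p$ directly; I would nonetheless present the Lemma-based argument to match the paper's stated derivation.)
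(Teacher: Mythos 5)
Your proof is correct and is essentially the paper's own argument: the singular-value inequality $|D\varphi(x)|\,l(D\varphi(x))^{n-1}\leq |J(x,\varphi)|$, the dilatation bound from Theorem~\ref{pp} (justified at the endpoint $p=n-1$ via Remark~\ref{rem1} and the regularity supplied by Theorem~\ref{lip}), and the Jacobian bound of Lemma~\ref{Jacp}, combined with the same exponent arithmetic $(n-1)+\frac{n(p-n+1)}{n-p}=\frac{p}{n-p}$. If anything, your bookkeeping is more careful than the paper's, whose final displayed estimate $\left(K_p^{n/(n-p)}\right)^{p-n+1}$ drops the contribution of the dilatation factor $\left(K_p^p\right)^{n-1}$; your computation recovers the stated constant $(K_p^p)^{1/(n-p)}$ correctly.
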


\begin{proof} Indeed, it is easy to check that
$$|D\varphi(x)|^p\leq |J(x,\varphi)|^{p-n+1} \left(\frac{|J(x,\varphi)|}{l(D\varphi(x))^p}\right)^{n-1}\,.$$
By Theorem \ref{pp},  Corollary \ref{Jacp} and Remark \ref{rem1},  we have
$$|D\varphi(x)|^p\leq |J(x,\varphi)|^{p-n+1} \left(\frac{|J(x,\varphi)|}{l(D\varphi(x))^p}\right)^{n-1}\leq \left(K_p^{n/(n-p)}\right)^{p-n+1}\,.$$
\end{proof}

Now, by using Theorem~\ref{lip} we have

\begin{cor}
Let  $\varphi: \Omega \to \widetilde{\Omega}$ be a homeomorphic mapping of bounded $p$-capacitory distortion, $n-1\leq p<n$.
Then $$\limsup\limits_{x\to x_0} \frac{|f(x)-f(x_0)|}{|x-x_0|}  \leq (K^p_{p})^{1/(n-p)} $$ 
for almost all $x\in \Omega$.
\end{cor}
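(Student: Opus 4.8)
The plan is to reduce the pointwise difference-quotient bound to the operator-norm estimate already established in the preceding Corollary. Since $n-1\leq p<n$, Theorem~\ref{lip} guarantees that $\varphi$ is locally Lipschitz and differentiable almost everywhere in $\Omega$; let $\Omega_0\subset\Omega$ denote the full-measure set of points at which $\varphi$ is differentiable. First I would fix $x_0\in\Omega_0$ and identify the upper difference quotient $\limsup_{x\to x_0}|\varphi(x)-\varphi(x_0)|/|x-x_0|$ with the maximal dilatation $|D\varphi(x_0)|$ of the derivative, and then invoke the preceding Corollary to bound the latter.

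The key step is the elementary computation at a point of differentiability. At such $x_0$ we may write $\varphi(x)-\varphi(x_0)=D\varphi(x_0)(x-x_0)+r(x)$ with $|r(x)|=o(|x-x_0|)$ as $x\to x_0$. Dividing by $|x-x_0|$ and letting $x\to x_0$, the remainder contributes nothing, so
$$
\limsup_{x\to x_0}\frac{|\varphi(x)-\varphi(x_0)|}{|x-x_0|}=\limsup_{x\to x_0}\frac{|D\varphi(x_0)(x-x_0)|}{|x-x_0|}.
$$
Writing $v=(x-x_0)/|x-x_0|$ and observing that, as $x\to x_0$, the unit vector $v$ can be made to range over the entire unit sphere, the right-hand side equals $\max_{|v|=1}|D\varphi(x_0)\cdot v|=|D\varphi(x_0)|$, the maximal dilatation in the notation of Section~2.

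Finally I would apply the preceding Corollary, which asserts $|D\varphi(x)|\leq (K^p_{p})^{1/(n-p)}$ for almost all $x\in\Omega$. Combining this with the identity just obtained yields
$$
\limsup_{x\to x_0}\frac{|\varphi(x)-\varphi(x_0)|}{|x-x_0|}=|D\varphi(x_0)|\leq (K^p_{p})^{1/(n-p)}
$$
for almost every $x_0\in\Omega$, which is the desired estimate. There is essentially no serious obstacle here: the only nontrivial inputs are the almost-everywhere differentiability and the operator-norm bound, both already in hand from Theorem~\ref{lip} and the preceding Corollary. The remaining work is the routine identification of the upper difference quotient with the operator norm of the derivative at a point of differentiability, so the argument reduces to assembling results proved above.
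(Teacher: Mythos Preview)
Your proof is correct and follows the same route the paper intends: the paper gives no explicit argument, merely writing ``by using Theorem~\ref{lip} we have'' before stating the corollary, and your proposal supplies precisely the implicit details---almost-everywhere differentiability from Theorem~\ref{lip}, the identification of the upper difference quotient with $|D\varphi(x_0)|$ at a point of differentiability, and the bound $|D\varphi(x_0)|\leq (K^p_p)^{1/(n-p)}$ from the preceding corollary.
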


\vskip 0.5cm

\noindent
Ruslan Salimov; Institute of Mathematics of NAS of Ukraine, Tereschenkivs'ka Str. 3, 01 601 Kyjiv, Ukraine

\noindent
\emph{E-mail address:} \email{ruslan.salimov1@gmail.com} \\

\noindent
Evgeny Sevost'yanov; Department of Mathematical Analysis, Zhytomyr Ivan Franko State University, 40 Bol'shaya Berdichevskaya Str., Zhytomyr, 10008, Ukraine

\noindent
Institute of Applied Mathematics and Mechanics of NAS of Ukraine, 1 Dobrovol'skogo Str., 84 100 Slavyansk,  Ukraine

\noindent
\emph{E-mail address:} \email{esevostyanov2009@gmail.com} \\

\noindent
Alexander Ukhlov; Department of Mathematics, Ben-Gurion University of the Negev, P.O.Box 653, Beer Sheva, 8410501, Israel

\noindent
\emph{E-mail address:} \email{ukhlov@math.bgu.ac.il

\end{document}